\newtheorem{theorem}{Theorem}[section]
\theoremstyle{definition}
\newtheorem{definition}[theorem]{Definition}
\theoremstyle{remark}
\newtheorem{remark}[theorem]{Remark}
\numberwithin{equation}{section}
\newcommand{\typeRPermutation}{\text{Type R permutation}}
\newcommand{\typeRlambdaPermutation}{\text{type R $\lambda$-permutation}}
\begin{document}
\title{Type R $\lambda$-Permutation Approach to Velleman's Open Problem}

\author{
  Polymath Jr. 2020 Collaboration
}
\footnotetext[1]{\hangindent=2em \hangafter=1
Student authors and affiliations, ordered by last name:
Hadi Hammoud (American University of Beirut),
Andrew D Harsh (Swarthmore College),
Antonio Marino (Carleton College),
Assaf Marzan (Hebrew University of Jerusalem),
Daniil Nikolievich Shaposhnikov (San Francisco State University),
Kealan Vasquez (Ave Maria University),
Hui Xiao (Oklahoma State University)}
\footnotetext[2]{\hangindent=2em \hangafter=1 
Mentor of the Polymath Jr. 2020 student authors: Yunus Zeytuncu (Department of Mathematics and Statistics, University of Michigan–Dearborn) }
\footnotetext[3]{\hangindent=2em \hangafter=1
We thank graduate student mentors Cody Stockdale and Nathan Wagner for their support during the project.}

\begin{abstract}
Previously, mathematicians Steven Krantz and Jeffery McNeal studied a type of positive numbers permutation called $\lambda$-permutation. This type of permutation, when applied to the index of terms of a series, is defined to be both convergence-preserving and ``fixing'' at least one divergent series, that is, rearranging the terms of any convergent series will result in a convergent series, while rearranging the terms of some divergent series will result in a convergent series. In general, if a divergent series can be fixed to converge in some way (it does not need to be by $\lambda$-permutation), it is called a ``conditionally divergent series''. In 2006, another mathematician Daniel Velleman raised an open problem related to $\lambda$-permutation: for a conditionally divergent series $\sum_{n=0}^{\infty}a_n,n\in \mathbb{N},a_n\in 
\mathbb{R}$, let $S=\{L \in \mathbb{R} \colon L = \sum_{n=0}^{\infty}{a_{\sigma\left(n\right)}}$ $\text{for some } \lambda\text{-permutation } \sigma\}$, can $S$ ever be something between $\emptyset$ and $\mathbb{R}$? This paper is devoted to partially answering this open problem by considering a subset of $\lambda$-permutation constraint by how we can permute, named type R $\lambda$-permutation. Then we answer the analogous question about a subset of S with respect to type R $\lambda$-permutation, named $Z_{R}=\{L \in \mathbb{R} \colon L = \sum_{n=0}^{\infty}{a_{\sigma\left(n\right)}}$ $\text{for some type R } \lambda \text{-permutation } \sigma\}$. We show that $Z_R$ is either $\emptyset$, a singleton or $\mathbb{R}$. We also provide sufficient conditions on the conditionally divergent series $\sum_{n=0}^{\infty}a_n$ for $Z_R$ to be a singleton or $\mathbb{R}$, by introducing a "substantial property" on the series.
\end{abstract}
\maketitle

\section{Introduction}
Steven Krantz and Jeffery McNeal studied a type of positive numbers permutation called $\lambda$-permutation in their paper \cite{A}. This type of permutation, when applied to the index of terms of series, is defined to be both convergence-preserving and ``fixing'' at least one divergent series, that is, rearranging the terms of any convergent series will result in convergent series while rearranging the terms of some divergent series will result in convergent series. Daniel Velleman \cite{C} refers to divergent series that can be fixed by some permutations (it does not need to be by $\lambda$-permutation) as ``conditionally divergent series''. 

\subsection{Bounded block number}
It was also proved \cite{C} that a permutation of positive integers $\sigma$ is convergence-preserving if and only if it has a bounded ``block number sequence''.
A formal definition of block number sequence can be found in Velleman's paper. Here we illustrate it with an example to help readers develop intuition: Let $\sigma$ be a permutation of positive numbers and the first four permutation steps are $\sigma\left(1\right)=1$,$\sigma\left(2\right)=3$,$\sigma\left(3\right)=4$,$\sigma\left(4\right)=2$. Then the block number sequence of $\sigma$ for the first 4 permutation steps is $(1, 2, 2, 1)$. Here is how the sequence was obtained:
$\sigma\left(1\right)=1 => 1       => 1$ block,
$\sigma\left(2\right)=3 => 1 \text{ } 3   => 2$ blocks since 1 and 3 are not connected,
$\sigma\left(3\right)=4 => 1 \text{ } 34 => 2$ blocks since 1 and 3 are not connected but 3 and 4 are connected,
$\sigma\left(4\right)=2 => 1234 => 1$ block since 1, 2, 3 and 4 are connected.  

The maximum number of this sequence so far is 2. As permutation goes, this maximum may increase or stay the same. If the maximum number is bounded as the sequence approaches infinity, then $\sigma$ has a bounded block number sequence. For convenience, we refer to any upper bound of the maximum number of a bounded block number sequence as a bounded block number in our paper.

Together, given a conditionally divergent series $S\ =\sum_{n=0}^{\infty}a_n,n\in \mathbb{N},a_n\in \mathbb{R}$ and a positive integers permutation $\sigma$ that fixes $S$ to some value $r\in \mathbb{R}$, $\sigma$ is a $\lambda$-permutation of S if and only if it has some bounded block number.

\subsection{Conditionally divergent series}
Conditionally divergent series are divergent series that can be rearranged to converge. For a given series, there can be many permutations to achieve this. The type of permutation involved in a standard textbook proof of the Riemann rearrangement theorem \cite{B} is one that rearranges any conditionally divergent series to converge. 

Riemann rearrangement theorem states that a conditionally convergent series of real numbers $\sum_{n=0}^{\infty}a_n,n\in \mathbb{N},a_n\in \mathbb{R}$ can be arranged to converge to an arbitrary real number $r$or diverge by some permutation. A textbook proof \cite{B} for the converging statement usually uses a permutation constructed in the following procedure: if the finite sum of the terms chosen from the original conditionally convergent series $S$ so far is less than or equal to $r$, then choose the first unused positive term of S to be the next term of the permuted series $S^\sigma$ by $\sigma$; if the finite sum is greater than $r$, then choose the first unused negative term of $S$.

The essence of the proof to why such a permutation can rearrange any conditionally convergent series to converge to any value utilizes two properties of conditionally convergent series: (a) The partial sum of all the positive terms or all the negative terms in the conditionally convergent series diverges. (b) The terms of the conditionally convergent series converge to 0. We will leave it to the reader to prove that conditionally divergent series also satisfy these two properties. Therefore, we can apply the same procedure to construct a permutation that fixes a conditionally divergent series to an arbitrary real number $r$ as in the proof of the Riemann rearrangement theorem.

In fact, the property ``in-order selection of same-sign elements'' of this permutation motivated us to restrict the $\lambda$-permutation to those with this property. We name this restricted type ``type R $\lambda$ -permutation''. With type R $\lambda$-permutation, our paper makes progress by considering a subset of $S$ in Velleman's open problem, $Z_{R}=\{L \in \mathbb{R} \colon L = \sum_{n=0}^{\infty}{a_{\sigma\left(n\right)}}$ $\text{for some type R } \lambda \text{-permutation } \sigma\}$, and we reach the conclusion that $Z_R$ is either $\emptyset$, a singleton or $\mathbb{R}$. We also prove sufficient conditions on the conditionally divergent series$ \sum_{n=0}^{\infty}a_n$ for $Z_R$ to be a singleton or $\mathbb{R}$. 

\section{Definition and Notation}
\begin{definition}[Positive/Negative block]

For $S = \sum_{n=0}^{\infty}a_n,n\in \mathbb{N},a_n\in \mathbb{R}$, define $P_S=\{n \in \mathbb{N}:a_n > 0 \}$ and $N_S=\{n \in \mathbb{N}: a_n \le 0 \}$ as sets of positive and negative indices with respect to $S$. We simply write $P, N$ when it is clear from the context which series they are with respect to.

Velleman defined \cite{C} the following interval notation to better describe permutations: for positive integers $c,d$ and $c\le d$ 
\[
\left[c,d\right]_\mathbb{Z}=\{x\in \mathbb{Z}^+:c \le x \le d\}
\]
Following this interval notation, present $P, N$ as a union of intervals:
\[
    P = 
    [{p_1},{q_1}]_{\mathbb{Z}} \cup [{p_2},{q_2}]_{\mathbb{Z}} \cup [{p_3},{q_3}]_{\mathbb{Z}} \cdots
\]
\[
     N = [{n_1},{m_1}]_{\mathbb{Z}} \cup [{n_2},{m_2}]_{\mathbb{Z}} \cup [{n_3},{m_3}]_{\mathbb{Z}} \cdots
\]
Define $P_i = [{p_i},{q_i}]_{\mathbb{Z}}$ and refer to it as the $i$-th positive block of $S$ and similarly $N_i = [{n_i},{m_i}]_{\mathbb{Z}}$ as the $i$-th negative block of $S$. Also, denote $S_{q_i}$(or $S_{m_i}$) as the finite sum of the series $S$ up to the last term $q_i$ in ${P_i}$(or $m_i$ in ${N_i}$). Similar notation applies to the permuted series $S^{\sigma}_{q_i}$(or $S^{\sigma}_{m_i}$)
\end{definition}

\begin{definition}[Summation over {same-sign block(s)}]
As we will be looking at summation over block(s) of same sign frequently, it will be helpful to define the following notation of summation over positive blocks from $P_i$ to $P_j$ ($i \leq j$) for $S = \sum_{n=0}^{\infty} a_n, n \in \mathbb{N}, a_n \in \mathbb{R}$:
\[
    S_{[P_i, P_j]} := \sum_{n \in \bigcup\limits_{k=i}^{j} P_k} a_n
\]
Similarly, we have summation over negative blocks from $N_i$ to $N_j$ as \[
    S_{[N_i, N_j]} := \sum_{n \in \bigcup\limits_{k=i}^{j} N_k} a_n
\]
\end{definition}

\begin{definition}[\typeRPermutation]
For $S = \sum_{n=0}^{\infty} a_n, n \in \mathbb{N}, a_n \in \mathbb{R}$, $\sigma$ is a type R permutation of $S$ if it preserves the order of terms of same sign in $S$. In other words, if $\sigma(i) < \sigma(j)$ and $a_{\sigma(i)}, a_{\sigma(j)}$ are both positive or both negative, then $i < j$.
\end{definition}
\begin{remark}
The permutation used to prove the Riemann rearrangement theorem described in Section `Conditionally divergent series' is a type R permutation.
\end{remark}

\begin{definition}[Type R $\lambda$-permutation]
For a conditionally divergent series $S = \sum_{n=0}^{\infty} a_n, n \in \mathbb{N}, a_n \in \mathbb{R}$ and $\sigma$ that fixes $S$, $\sigma$ is a type R $\lambda$-permutation of $S$ if it is both type R and has a bounded block number.
\begin{remark}
The advantage of restricting $\lambda$-permutation to type R $\lambda$-permutation defined is that, by imposing an order in how we permute terms, we are able to relate the partial sum of the permuted series with the one of the original series. 

To illustrate, let $S = \sum_{n=0}^{\infty} a_n, n \in \mathbb{N}, a_n \in \mathbb{R}$ be the original series and $S^{\sigma} = \sum_{n=0}^{\infty} a_{\sigma(n)}$, $a_{\sigma(n)}\in \mathbb{R}$ be the permuted series, where $\sigma$ is a type R $\lambda$-permutation with bounded block number $C$. Without loss of generality, we may assume that $S$ starts with a negative term. (In fact, we can always assume that $S$ starts with a negative block consisting only of a zero. Therefore, we will assume all the series $S$ in this paper start with a negative block)

Since type R permutation preserves the order of blocks of same sign, we have the following inequality:
\begin{align}
    \forall i \in \mathbb{N}, S_{[N_1, N_i]} + S_{[P_1, P_{i-C}]} \leq S^\sigma_{m_i} \leq  S_{[N_1, N_i]} + S_{[P_1, P_{i+C-1}]}    
\end{align}

To show this, based on bounded block number $C$, by the time when the last index in the negative block $N_i$ was permuted, the last index in the positive block $P_{i-C}$ would have already been permuted. Otherwise, there will be enough gaps between permuted indexes such that they form more than $C$ blocks. This will violate the assumption of a bounded block number $C$. For the same reason, by the time that the last index in the negative block $N_i$ was permuted, the first index in the positive block $P_{i+C}$ should not have been permuted. 

Inequality (2.1) is same as 
\begin{align}
    \forall i \in \mathbb{N}, S_{q_{i-C}} + S_{[N_{i-C+1}, N_i]} \leq S^\sigma_{m_i} \leq  S_{m_{i}} + S_{[P_{i}, P_{i+C-1}]}
\end{align}
Therefore, we relate the partial sum of the permuted series, $S^\sigma_{m_i}$, with the ones of the original series, $S_{q_{i-C}}$ and $S_{m_{i}}$, plus the sum of a finite number of same-sign blocks of the original series $S_{[N_{i-C+1}, N_i]}$ and  $S_{[P_{i}, P_{i+C-1}]}$. 

Inequality (2.1) is the key observation on the relationship between the original and permuted series under type R $\lambda$-permutation, which will be utilized in the proof of our main theorem. This relation also motivates our next property ``substantial property'' defined on finite blocks of the same sign of the original series in order to gain clarity on the value to which the permuted series converges.
\end{remark}
\end{definition}

\begin{definition}[Substantial property]
For $S = \sum_{n=0}^{\infty} a_n, n \in \mathbb{N}, a_n \in \mathbb{R}$, $S$ satisfies the substantial property on positive blocks, denoted as $ST_P$, if there exist some $k \in \mathbb{N}$ and some $\epsilon \in \mathbb{R}^{+}$ such that for some $i_0 \in \mathbb{N}$,  we have $S_{[P_i, P_{i+k}]} \geq \epsilon \text{ for } \forall i>i_0$. Similarly, $S$ satisfies substantial property on negative blocks, denoted as $ST_N$, if there exists some $k \in \mathbb{N}$ and some $\epsilon \in \mathbb{R}^{+}$ such that for some $i_0 \in \mathbb{N}$,  we have $S_{[N_i, N_{i+k}]} \leq -\epsilon \text{ for } \forall i>i_0$ 
\begin{remark}
Substantial property means (a) the sum of a finite number of same-sign blocks is big enough to bound away from 0 (i.e. "substantially large") (2) there are infinite amount of such same-sign blocks (i.e. “substantially many"). 

Velleman provided an example series with the substantial properties \cite{C}: \\ 
$\sum_{k=1}^{\infty} a_k = 1 - 1 - \frac{1}{2} - \frac{1}{3} - \frac{1}{4} + \frac{1}{2} + \frac{1}{3} + ... + \frac{1}{33} - \frac{1}{5} - ...$ where the positive and negative blocks are constructed as following: the first block consists of only positive terms such that the partial sum of the series equals to some $s_1$ where $s_{1} \geq 1$, the second block consists of only negative terms such that the partial sum of the series equals to $-t_1$ where $t_1 \geq s_{1} + 1$, the third block consists of only positive terms such that the partial sum of the series equals to $s_2$ where $s_2 \geq t_1+1$ and so on. Since $S_{\left[P_i,P_i\right]},{|S}_{\left[N_i,N_i\right]}|\geq1$,$\forall i\in \mathbb{N}$,  $S$ satisfies both $ST_P$ and $ST_N$.
\end{remark}
\end{definition}

\section{Main theorem}
\begin{theorem}
Let $S = \sum_{n=0}^{\infty} a_n, n \in \mathbb{N}, a_n \in \mathbb{R}$ be a conditionally divergent series and consider the set $Z_{R} = \{r \in \mathbb{R} : r = S^{\sigma}, \text{ where $\sigma$ is some \typeRlambdaPermutation}\}$. If $S$ cannot be fixed to any value by any \typeRlambdaPermutation, then $Z_{R} = \emptyset$. Otherwise (i.e, $|Z_{R}| \geq 1$), there are two cases:
\begin{enumerate}
\item[(1)]If $S$ satisfies $ST_P$ and $ST_N$, then $Z_{R} =\mathbb{R}$. 
\item[(2)] Otherwise (that is, $S$ violates $ST_P$ or violates $ST_N$), $Z_{R}$ is a singleton. (i.e., $|Z_{R}| = 1$)
\end{enumerate}
\end{theorem}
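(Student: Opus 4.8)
The plan is to dispatch the trivial case, distil from Inequality (2.1) a quantitative estimate of how far a \typeRlambdaPermutation\ can push a partial sum of $S$, and then handle the two cases in turn. If no \typeRlambdaPermutation\ fixes $S$ then $Z_R=\emptyset$ is immediate from the definition of $Z_R$, so from now on assume $|Z_R|\ge 1$ and fix a \typeRlambdaPermutation\ $\sigma_0$ with block number bound $C_0$ and $S^{\sigma_0}=r_0$. The tool used everywhere is the following consequence of Inequality (2.1) together with the analogous inequality obtained by interchanging the roles of $P$ and $N$: for \emph{any} \typeRlambdaPermutation\ $\sigma$ with block number bound $C$ and all sufficiently large $i$,
\[
|S^{\sigma}_{m_i}-S_{q_i}|\ \le\ S_{[P_{i-C+1},\,P_{i+C-1}]},\qquad
|S^{\sigma}_{q_i}-S_{q_i}|\ \le\ |S_{[N_{i-C+1},\,N_{i+C-1}]}|,
\]
and, since $S^{\sigma}$ converges, $S^{\sigma}_{m_i}\to S^{\sigma}$ and $S^{\sigma}_{q_i}\to S^{\sigma}$. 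In words: a type R $\lambda$-permutation moves a partial sum of $S$ by no more than the mass of $2C-1$ consecutive same-sign blocks sitting near that index.

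For Case (2), suppose $S$ violates $ST_P$ (the case of a violated $ST_N$ is word-for-word the same after replacing $P$ by $N$, $S^{\sigma}_{m_i}$ by $S^{\sigma}_{q_i}$, and positive block sums by absolute values of negative block sums). Let $\sigma,\tau$ be two \typeRlambdaPermutation s, put $C$ equal to the larger of their block number bounds, and let $\varepsilon>0$. The negation of $ST_P$ applied with $k=2C-2$ produces infinitely many $i$ with $S_{[P_{i-C+1},\,P_{i+C-1}]}<\varepsilon$; for each such $i$ the first estimate above yields $|S^{\sigma}_{m_i}-S^{\tau}_{m_i}|<2\varepsilon$, and letting $i\to\infty$ along this subsequence gives $|S^{\sigma}-S^{\tau}|\le 2\varepsilon$. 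As $\varepsilon$ was arbitrary, $S^{\sigma}=S^{\tau}$; hence $|Z_R|\le 1$, so $|Z_R|=1$.

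For Case (1) assume $S$ satisfies $ST_P$ (parameters $k_P,\varepsilon_P$) and $ST_N$ (parameters $k_N,\varepsilon_N$); we must realize every $r\in\mathbb{R}$ as some $S^{\sigma}$. Since $r_0\in Z_R$, fix $r\ne r_0$, and after replacing $S$ by $-S$ if necessary (which swaps $ST_P$ and $ST_N$) assume $r>r_0$; the case $r<r_0$ is then symmetric, with $ST_P$ in place of $ST_N$. Let $\sigma$ be the Riemann-style permutation aimed at $r$: append the next unused positive term while the running permuted partial sum is $\le r$, and the next unused negative term otherwise. This is exactly the construction recalled for conditionally divergent series in the section ``Conditionally divergent series'', so $\sigma$ is a bona fide type R permutation with $S^{\sigma}=r$; the only thing left is to show $\sigma$ has a bounded block number, and this is where the substantial property and the permutation $\sigma_0$ are used.

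I would argue the block number bound by comparing $\sigma$ with $\sigma_0$ through the counting functions $\phi(x)=\#\{i:S_{[P_1,P_i]}\le x\}$ and $\psi(y)=\#\{j:S_{[N_1,N_j]}\ge y\}$. Note first that $\sigma$ and $\sigma_0$ consume positive terms in the \emph{same} order, so at the instant either one has used exactly $m$ positive terms its cumulative positive mass is the same value $x_m$; at that instant each has completed $\phi(x_m)$ positive blocks, and the one whose cumulative negative mass is $y$ has completed $\psi(y)$ negative blocks, so its block number is comparable to $1+|\phi(x_m)-\psi(y)|$. Because $S^{\sigma_0}\to r_0$ with block number $\le C_0$ we get $|\phi(x_m)-\psi(r_0-x_m)|\le C_0+O(1)$ for large $m$, while $S^{\sigma}\to r$ forces $\sigma$'s cumulative negative mass at step $m$ to be $r-x_m+o(1)$. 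Hence $\sigma$'s block number is controlled by $1+|\phi(x_m)-\psi(r-x_m)|\le 1+|\phi(x_m)-\psi(r_0-x_m)|+|\psi(r_0-x_m)-\psi(r-x_m)|+O(1)$, and the middle term is the number of indices $j$ with $S_{[N_1,N_j]}$ in an interval of length $\approx r-r_0$. Now $ST_N$ finishes the job: it gives $S_{[N_1,N_{j+k_N}]}-S_{[N_1,N_{j-1}]}\le-\varepsilon_N$ for large $j$, so $S_{[N_1,N_j]}$ falls by at least $\varepsilon_N$ over every $k_N+1$ consecutive indices, and therefore at most a constant number of the $S_{[N_1,N_j]}$ lie in any interval of length $\approx r-r_0$ (this also absorbs the $o(1)$ deviations above). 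So $|\phi(x_m)-\psi(r-x_m)|$ stays bounded, $\sigma$ has a bounded block number, and $r\in Z_R$; as $r$ was arbitrary, $Z_R=\mathbb{R}$. The main obstacle is making this last comparison airtight: one must carefully bookkeep the partial same-sign blocks at the two frontiers of the consumed set, verify that ``block number $\le C_0$'' really delivers the stated bound on $|\phi-\psi|$ at \emph{every} step of $\sigma_0$ and not merely at block boundaries, and confirm that the $o(1)$ trajectory deviations disturb the step function $\psi$ by only a bounded amount — all of which rest on the substantial property.
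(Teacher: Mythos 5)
Your proposal is logically sound and reaches the theorem by a partly different route, so a comparison is in order. The trivial case and Case (2) coincide with the paper's argument in substance: the paper proves the contrapositive (two distinct limits $r_1>r_2$ force $S_{[P_{i-C+1},P_{i+C-1}]}>\frac{r_1-r_2}{2}$ for all large $i$, i.e.\ $ST_P$ holds), whereas you run the implication forward (negation of $ST_P$ with $k=2C-2$ gives a subsequence along which any two permuted partial sums $S^{\sigma}_{m_i},S^{\tau}_{m_i}$ are within $2\varepsilon$ of the common anchor $S_{q_i}$); these are the same estimate read in opposite directions, both resting on Inequality (2.1). Case (1) is where you genuinely diverge. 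The paper also builds the Riemann-style type R permutation aimed at $r$, but it proves the block-number bound by a direct contradiction: using $\sigma_0$ and the convergence $S^{\sigma_0}_{m_i}\to r_0$ it gets $S_{q_i}+S_{[N_{i+1},N_{i+C_0}]}<r_0+\epsilon$, then invokes $ST_N$ and the Archimedean property to append $M$ further clusters of $k_1+1$ negative blocks until the running sum drops below $r$, so that selecting the first term of $N_{i+C}$ before $P_{i+1}$ would force the permuted partial sum to be simultaneously $>r$ and $<r$. Your counting-function comparison via $\phi$ and $\psi$ is a legitimate alternative — it is more symmetric (it bounds $|\phi-\psi|$ in one shot rather than arguing the two lag directions separately) and makes quantitatively explicit that the extra lag is about $|r-r_0|/\varepsilon_N$ blocks, which is essentially the same constant $M$ the paper extracts from the Archimedean property. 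What it buys you is a cleaner picture of \emph{why} the substantial property is the right hypothesis; what it costs is exactly the bookkeeping you flag at the end: you must verify that $\phi(x_m)$ equals the number of completed positive blocks (this is exact, since positive block sums are strictly positive), that $\psi(y)$ can overcount the completed negative blocks only by a bounded amount (this needs $ST_N$ to exclude long runs of zero-sum negative blocks), and that the block number stays controlled at the intermediate steps between consecutive ``$m$ positive terms used'' instants (it does, because the gap count is unimodal in the number of completed negative blocks while the positive frontier is frozen). None of these is a gap in the idea — each is checkable — but your write-up would need to discharge them explicitly, and the paper's contradiction argument avoids them entirely. One small remark: your normalization ``replace $S$ by $-S$ so that $r>r_0$'' is harmless but unnecessary, since your $\psi$-interval has length $|r-r_0|$ regardless of sign; and you should adopt the paper's convention that $S$ begins with a (possibly zero) negative block so that $S_{q_i}=S_{[N_1,N_i]}+S_{[P_1,P_i]}$ and the anchor identities you use are literally correct.
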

\begin{proof}
\textbf{To prove (1)}: we adopt the same high-level proof strategy as in Velleman's paper \cite{C}.  That is, to prove that for $\forall r \in \mathbb{R}$, we can construct a 
type R permutation that fixes $S$ to $r$ and prove the permutation is also a $\lambda$ permutation (i.e, has a bounded block number) by showing that the first term of $(i+1)$-th positive block of $S$ has to be permuted by $\sigma$ before the first term of the $(i+C)$-th negative block of S for  $\forall i > i_3$ for some $i_3 \in \mathbb{N}$ for every $r \in \mathbb{R}$. The proof for the symmetric case proceeds similarly.

For $\forall r \in \mathbb{R}$, we construct a type R permutation $\sigma$ that fixes $S$ to $r$ using the following procedure: if the finite sum of the terms chosen so far is less than or  equal  to $r$,  then  we  choose  the  first  unused  positive  term  of $S$ to be the next term of the permuted series $S^{\sigma}$. If the finite sum is greater than $r$, then we choose the first unused negative term.  Clearly $\sigma$ is a type R permutation that fixes $S$ to r. All we left to do is prove that $\sigma$ is also a $\lambda$-permutation, that is, it has a bounded block number.

Since $S$ can be fixed by at least one $\typeRlambdaPermutation$ \text{ }$\sigma_0$ of some bounded block number $C_0$ to some $r_0$, by inequality (2.2), we have 
\begin{align}
\forall i \in \mathbb{N}, S_{q_{i-C_0}} + S_{[N_{i-C_0+1}, N_i]} \leq S^{\sigma_0}_{m_i} \leq  S_{m_{i}} + S_{[P_{i}, P_{i+C_0-1}]}
\end{align}
Since $S^{\sigma_0} = r_0$, we have $S^{\sigma_0}_{m_i} \rightarrow{} r_0, i \rightarrow{} \infty$. Let $\epsilon$ be an arbitrary positive real number and denote $\epsilon_0 = r_0 + \epsilon$. By convergence of series, for some $i_0 \in \mathbb{N}$, we have the following. 
\begin{align}
\forall i > i_0, S_{q_{i-C_0}} + S_{[N_{i-C_0+1}, N_i]} \leq S^{\sigma_0}_{m_i} < r_0 + \epsilon = \epsilon_0
\end{align}
equivalent to
\begin{align}
\forall i > i_0 - C_0 \in \mathbb{N}, S_{q_{i}} + S_{[N_{i+1}, N_{i+C_0}]}  < r_0 + \epsilon = \epsilon_0
\end{align}
Since $S$ satisfies $ST_N$, there exist some $k_1 \in \mathbb{N}$ and some $\epsilon_1 \in \mathbb{R}^{+}$ such that for some $i_1 \in \mathbb{N}$,  we have $S_{[N_i, N_{i+k_1}]} \leq -\epsilon_1 \text{ for } \forall i>i_1$. 
By the Archimedean property of $\mathbb{R}$, there exists $M \in \mathbb{N}$ such that $\epsilon_0 - M * \epsilon_1 < r$. The existence of such $M$ then implies the following:
\begin{equation}
\begin{aligned} 
\forall i > \text{max}(i_0 - C_0, i_1) \in \mathbb{N}, S_{q_{i}} + S_{[N_{i+1}, N_{i+C_0}]}
& + S_{[N_{i+C_0+1}, N_{i+C_0+1+k_1}]} \\
& + S_{[N_{i+C_0+2+k_1}, N_{i+C_0+2+2*k_1}]} \\
& + S_{[N_{i+C_0+3+2*k_1}, N_{i+C_0+3+3*k_1}]} \\
& + ... \\
& + S_{[N_{i+C_0+M+(M-1)*k_1}, N_{i+C_0+M+M*k_1}]} \\
& = S_{q_{i}} + S_{[N_{i+1}, N_{i+C_0+M+M*k_1}]} \\
& < \epsilon_0 - M * \epsilon_1  < r
\end{aligned}
\end{equation}
Intuitively speaking, (3.4) is about combining $M$ consecutive groups of $(k_1+1)$ negative blocks to create a larger cluster of consecutive negative blocks that add up to a sufficiently large negative effect such that the partial sum is less than $r$

Let $C = C_0 + M + M * k_1 + 1$. Suppose that we have the first term of the $(i+C)$ -th negative block of $S$ being permuted by $\sigma$ before the $(i+1)$ -th positive block and this occurs infinitely often for $i \in \mathbb{N}$. Since $\sigma$ is type R and we have selected a negative term (that is, the first term of the $(i+C)$-th negative block), then we know the finite sum of $S^{\sigma}$ before selecting that negative term, denoted as $S^{\sigma}_n$ where $n$ is the index of the last permuted element before permuting that negative term, which is greater than $r$.
\begin{align}
    S^{\sigma}_n > r
\end{align}
Also, since $\sigma$ is type R, which means that we permute blocks of the same sign in order, then we have
\begin{align}
   S^{\sigma}_n \leq S_{[N_{1}, N_{i+C-1}]} + S_{[P_{1}, P_{i]}}
\end{align}
Since all negative blocks up to before $N_{i+C-1}$ must have been permuted and the farthest positive block that can be permuted is $P_{i}$, before permuting the first term of the $(i+C)$-th negative block,the first term of the $(i+1)$-th positive block has not yet been selected by our supposition. (3.6) is equivalent to 
\begin{align}
   S^{\sigma}_n \leq S_{q_{i}} + S_{[N_{i+1}, N_{i+C-1}]} = S_{q_{i}} + S_{[N_{i+1}, N_{i+C_0 + M + M*k_1}]}
\end{align}

By (3.4), 
\begin{align}
   S^{\sigma}_n < r
\end{align}
which contradicts (3.5). Therefore, our supposition is false.
\\~\\ \textbf{To prove (2):} we first prove by contradiction that the statement is true for $S$ violating $ST_P$. The proof for the symmetric case proceeds similarly.

Suppose $Z_R$ is not a singleton, then there exists $r_1 > r_2$ where $r_1, r_2 \in Z_R$. By the definition of $Z_R$, we know that there exist some \typeRlambdaPermutation s $\sigma_1, \sigma_2$ of some bounded block number $C_1, C_2$ that fix $S$ to $r_1, r_2$. Let $C=\text{max}(C_1, C_2)$,
\begin{align}
\forall i \in \mathbb{N}, S_{[N_1, N_i]} + S_{[P_1, P_{i-C}]} \leq S^{\sigma_1}_{m_i} \leq  S_{[N_1, N_i]} + S_{[P_1, P_{i+C-1}]}
\end{align}
\begin{align}
\forall j \in \mathbb{N},  S_{[N_1, N_j]} + S_{[P_1, P_{j-C}]} \leq S^{\sigma_2}_{m_j} \leq  S_{[N_1, N_j]} + S_{[P_1, P_{j+C-1}]}
\end{align}

Since $S^{\sigma_1} = r_1$, we have $S^{\sigma_1}_{m_i} \xrightarrow{} r_1, i \xrightarrow{} \infty$ and similarly $S^{\sigma_2}_{m_j} \xrightarrow{} r_2, j \xrightarrow{} \infty$. Let $\epsilon = \frac{(r_1-r_2)}{4}$. Then, by the convergence of a series, the following two inequalities hold.
\begin{align}
\exists i_0 \in \mathbb{N}, \text{ s.t. } \forall i > i_0 , \text{ we have } r_1  - \epsilon < S^{\sigma_1}_{m_i} \leq  S_{[N_1, N_i]} + S_{[P_1, P_{i+C-1}]}, 
\end{align}
\begin{align}
\exists j_0 \in \mathbb{N}, \text{ s.t. } \forall j > j_0, \text{ we have } S_{[N_1, N_j]} + S_{[P_1, P_{j-C}]} \leq S^{\sigma_2}_{m_j} < r_2 + \epsilon
\end{align}
Let $k_0 = \text{max}(i_0, j_0) \in \mathbb{N}$, then we see that (3.11) and (3.12) hold for $\forall i, j > k_0$. By adding these two inequalities and rearranging the terms, we have the following. 
\begin{align}
    S_{[N_1, N_i]} + S_{[P_1, P_{i+C-1}]} - S_{[N_1, N_j]} - S_{[P_1, P_{j-C}]} > r_1 - \epsilon - r_2 - \epsilon = \frac{(r_1-r_2)}{2}, \text{ } \forall i, j > k_0
\end{align}
Let $i = j > k_0$, then we have 
\begin{align}
    S_{[N_1, N_i]} + S_{[P_1, P_{i+C-1}]} - S_{[N_1, N_i]} - S_{[P_1, P_{i-C}]} > r_1 - \epsilon - r_2 - \epsilon = \frac{(r_1-r_2)}{2}, \text{ } \forall i > k_0
\end{align}
\begin{align}
    S_{[P_1, P_{i+C-1}]} - S_{[P_1, P_{i-C}]} = S_{[P_{i-C+1}, P_{i+C-1}]} >  \frac{(r_1-r_2)}{2} > 0, \text{ } \forall i > k_0 \in \mathbb{N}
\end{align}
which contradicts our assumption that $S$ violates $ST_P$.

For the case where we assume that $S$ violates $ST_N$, we adopt the same proof, except that we consider 
\begin{align}
    S_{[N_1, N_j]} + S_{[P_1, P_{j-C}]} - S_{[N_1, N_i]} - S_{[P_1, P_{i+C-1}]} < r_2 + \epsilon - r_1 + \epsilon = - \frac{(r_1-r_2)}{2}, \text{ } \forall i, j > k_0
\end{align}
Let $j = i + 2C -1 > k_0$, then
\begin{align}
  S_{[N_{i+1}, N_{i+2C-1}]} < r_2 + \epsilon - r_1 + \epsilon = -\frac{(r_1-r_2)}{2} < 0, \text{ } \forall i > k_0
\end{align}
which contradicts the assumption that $S$ violates $ST_N$.
\end{proof}
\begin{remark}
Velleman provided \cite{C} examples for $Z_R = \emptyset$ and $Z_R = \mathbb{R}$. Here is an example for $Z_R$ is a singleton:
\begin{multline*}
1 - 1 + \left( \frac{1}{2} - \frac{1}{2} \right) + \left( \frac{1}{3} - \frac{1}{3} \right) + \left( \frac{1}{4} + \frac{1}{4} + \frac{1}{4} + \frac{1}{4} - \frac{1}{4} - \frac{1}{4} - \frac{1}{4} - \frac{1}{4} \right) + \left( \frac{1}{5} - \frac{1}{5} \right) + \cdots \\ 
+ \left( \frac{1}{9} + \frac{1}{9} + \frac{1}{9} + \frac{1}{9} + \frac{1}{9} + \frac{1}{9} + \frac{1}{9} + \frac{1}{9} + \frac{1}{9} - \frac{1}{9} - \frac{1}{9} - \frac{1}{9} - \frac{1}{9} - \frac{1}{9} - \frac{1}{9} - \frac{1}{9} - \frac{1}{9} -\frac{1}{9}\right) \\
+ \left( \frac{1}{10} - \frac{1}{10} \right) + \left( \frac{1}{11} - \frac{1}{11} \right) + \left( \frac{1}{12} - \frac{1}{12} \right) + 
\left( \frac{1}{13} - \frac{1}{13} \right) + \left( \frac{1}{14} - \frac{1}{14} \right)
\cdots
\end{multline*}
The rule of constructing such a series is for positive integer $k$,  if $k \neq n^{2}, \forall n \in \mathbb{N}$,  we add to the series a block $ \left( \frac{1}{k} - \frac{1}{k} \right) $; if  $\exists k \in \mathbb{N}, k=n^{2}$, we add to the series the block $ \left( \frac{1}{k} + \cdots +\frac{1}{k} - \frac{1}{k} - \cdots - \frac{1}{k} \right) $ where the positive and negative fractions repeats $k$ times. It is not difficult to prove that this series is conditionally divergent, has {\typeRlambdaPermutation} with a block number $2$ that fixes it to the value $0$. And this series violates both $ST_P, ST_N$. To see that it cannot be fixed to any other value by a {\typeRlambdaPermutation}, apply the proof of (2) specifically to this series.  
\end{remark}

\section{Future works}
Moving forward from {\typeRlambdaPermutation}, it will be helpful to incrementally relax the constraint it imposes on general $\lambda$-permutation to progress toward the original Velleman's problem. Some ideas include out-of-order selection of elements within a positive/negative block, out-of-order selection of a finite number of same-sign blocks, and combining the above two.

\bibliographystyle{amsplain}

\end{document}